\newtheorem{thm}{Theorem}[section]
\newtheorem{lemma}[thm]{Lemma}
\newtheorem{proposition}[thm]{Proposition}
\theoremstyle{definition}
\newtheorem{defn}[thm]{Definition}
\newtheorem{conj}[thm]{Conjecture}
\newtheorem{rmk}[thm]{Remark}
\numberwithin{equation}{section}
\begin{document}

\title[Generalized Ramanujan equation]{ 
On the generalized Ramanujan equation $x^2+(2k-1)^y =k^z$ }

\author{Maohua Le}
\address{Institute of Mathematics, Lingnan Normal College, Zhangjiang, Guangdong, 524048, China}
\email{lemaohua2008@163.com}
\author[A. Srinivasan]{Anitha Srinivasan}
\address{Departamento de m\'etodos cuantitativos, Universidad Pontificia de Comillas (ICADE),
C/ Alberto Aguilera, 23 - 28015 Madrid
} 
\email{asrinivasan@icade.comillas.edu}

\begin{abstract}
 A conjecture of N. Terai states that for any integer $k>1$, the equation $x^2+(2k-1)^y =k^z$ has only one solution, namely, $(x, y, z) = (k-1, 1, 2).$ 
  Using the structure of class groups of  binary 
quadratic forms, we prove the conjecture when $4\Vert k$, with $2k-1$ a prime power 
and $4\le k\le 1000$. 
\end{abstract}
\maketitle
\noindent
 {\footnotesize{\textbf{ AMS Subject
Classification}}: 11D09, 11R29, 11E16.
 Keywords: 
binary quadratic forms, exponential diophantine equation, generalized Ramanujan-Nagell equation. }

\section{ Introduction}
N. Terai in \cite{T} stated the following conjecture for a particular case of the generalized Ramanujan Nagell equation.
\begin{conj}
For any integer $k>1$, the diophantine equation
\begin{equation}\label{RN}
x^2 +(2k-1)^y =k^z, x, y, z\in \mathbb{N}
\end{equation}
has only the solution $(x, y, z) = (k-1, 1, 2).$
\end{conj}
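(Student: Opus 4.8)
The plan is to attack \eqref{RN} by combining elementary congruences, a descent through binary quadratic forms, and quantitative transcendence estimates. Throughout I write $D=2k-1$, so that $k=(D+1)/2$ and the known solution is $(x,y,z)=(k-1,1,2)$. First I would record the trivial constraints. Reducing modulo $k$ gives $x^2\equiv(-1)^{y+1}\pmod k$, so $x^2\equiv 1\pmod k$ when $y$ is odd and $x^2\equiv-1\pmod k$ when $y$ is even; moreover $(2k-1)^y\ge 2k-1>k$ forces $z\ge 2$. Reducing modulo $4$ then pins down parities in terms of the residue of $k$: when $k$ is even one has $2k-1\equiv 3\pmod 4$, and comparing $x^2+(2k-1)^y\equiv x^2+(-1)^y$ with $k^z\equiv 0\pmod 4$ forces $y$ odd, since an even $y$ would require $x^2\equiv 3\pmod 4$. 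The analogous reductions in the remaining residue classes of $k$ modulo $4$ and $8$ are the bookkeeping that isolates the cases ``$y$ odd'' and ``$z$ even,'' and I expect these to absorb most of the routine analysis.

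The structural heart is the case $y$ odd. Writing $y=2m+1$ turns \eqref{RN} into
\begin{equation*}
x^2+(2k-1)\bigl((2k-1)^m\bigr)^2=k^z,
\end{equation*}
which exhibits $k^z$ as a value of the principal binary quadratic form of discriminant $-4(2k-1)$. I would then pass to the imaginary quadratic order $\mathbb{Z}[\sqrt{-(2k-1)}]$ and factor the ideal generated by $x+(2k-1)^m\sqrt{-(2k-1)}$, tracking the resulting ideal class. The coprimality $\gcd(k,2k-1)=1$, together with $\gcd(x,k)=1$ from the congruence above, guarantees that the two conjugate factors share no common ideal divisor away from the ramified and dyadic primes, so each is, up to units, a perfect $z$-th power of an ideal $\mathfrak{a}$ with norm dividing $k$. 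Principality of the product then yields $[\mathfrak{a}]^z=1$, so the order of $[\mathfrak{a}]$ must divide $z$. Forcing $z=2$ (hence $y=1$) amounts to proving that the class of a prime ideal above $k$ has order exactly $2$ and that no higher admissible power can be principal. In the complementary case $z=2w$, the factorization $(k^w-x)(k^w+x)=(2k-1)^y$ into coprime factors gives $a^y+b^y=2k^w$ with $ab=2k-1$ and $\gcd(a,b)=1$, and here the primitive divisor theorem for Lehmer sequences (Bilu--Hanrot--Voutier) should eliminate every $y>1$.

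The main obstacle is the uniform control of the class group of discriminant $-4(2k-1)$ as $k$ ranges over all integers $>1$. The descent reduces the conjecture to a statement about the order of one specific ideal class, but class numbers grow with $k$ and there is no a priori reason that the pertinent class has small order for every $k$; so the form-class argument by itself cannot close the general case. To confine the remaining solutions I would invoke Baker's theory of linear forms in logarithms, which yields an effective upper bound for $\max(y,z)$ in terms of $k$, so that the congruence and class-group constraints leave only finitely many triples to test for each fixed $k$. However, that bound is astronomically large and non-uniform, so it does not settle the conjecture in closed form. This gap --- the lack of a method that simultaneously controls the order of the relevant ideal class and sharpens the transcendence bound uniformly in $k$ --- is precisely why the assertion remains a conjecture, and it is what compels any unconditional result (such as those obtainable here) to restrict to families of $k$ for which the class-group order can be computed or bounded directly.
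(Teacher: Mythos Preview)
The statement you are addressing is Conjecture~1.1, which the paper presents as an \emph{open} conjecture of Terai; there is no proof of it in the paper, and you yourself conclude by identifying the gap that keeps it conjectural. So your proposal is not a proof, nor does it purport to be one: the descent through the principal form of discriminant $-4(2k-1)$ reduces matters to the order of the class of a prime above $k$, but you have no mechanism to force that order to be $2$ for every $k$, and the linear-forms-in-logarithms bound you invoke is non-uniform in $k$ and hence cannot close the argument. Your own final paragraph says exactly this.

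It is still worth contrasting your sketch with what the paper \emph{does} prove, namely Theorem~1.3 (the conjecture for $4\Vert k$, $2k-1$ a prime power, $4\le k\le 1000$). Rather than working at discriminant $-4(2k-1)$, the paper first reduces modulo $k-1$ to obtain $ab\mid x$ where $k-1=ab^2$ with $a$ squarefree, and then passes to the form $F=(d^2,0,2k-1)$ of discriminant $-4d^2(2k-1)$ for a well-chosen divisor $d>1$ of $ab$. The freedom to choose $d$ is the whole point: for each $k$ in the stated range one verifies, by a finite class-group computation at that larger discriminant (Proposition~3.3), that $F$ represents only \emph{even} powers of $k$, contradicting the fact (borrowed from \cite{DGX}) that $z$ must be odd. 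Your approach, by pinning the discriminant at $-4(2k-1)$, forfeits this degree of freedom and is left trying to control a single class whose order you cannot predict; conversely, the paper's method is inherently case-by-case and likewise offers no route to the full conjecture --- which is why the paper ends by formulating a further conjecture (Conjecture~4.4) about the existence of such a $d$ in general.
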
 
The above conjecture has been proved only in some special cases. In \cite{DGX}, M.-J. Deng, J. Guo and A.-J. Xu verified Conjecture 1.1 when $k\equiv 3 \pmod 4$ with $3\le k\le 499.$ 

N. Terai \cite{T} dealt with $k\le 30$ using methods that did not apply to $k= 12, 24$. Subsequently M. A. Bennett and N. Billerey \cite{BB} used the modular approach to solve these two cases. 

Recently Mutlu, Le and Soydan proved the following result.
\begin{thm}[\cite{MLS}]
Under the assumption of the GRH, if $k\equiv 0\pmod 4$ and
$20<k<724$ with $2k-1$ a prime power, then Conjecture 1.1 is true.
\end{thm}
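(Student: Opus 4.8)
The plan is to translate equation \eqref{RN} into an ideal equation in the imaginary quadratic field $K=\mathbb{Q}\big(\sqrt{-(2k-1)}\big)$ and then to use the structure of its class group to descend to an equation that forces the known solution. I would first clear away the easy cases. Reducing \eqref{RN} modulo $4$ and using $k\equiv 0\pmod 4$ shows that $y$ is odd (otherwise $x^{2}\equiv -(2k-1)^{y}\equiv -1\pmod 4$), while comparing sizes gives $z\ge 2$. If $z$ is even, say $z=2e$, then $(2k-1)^{y}=(k^{e}-x)(k^{e}+x)$ with the two factors coprime (a common prime would divide $2k$ and $2x$, hence $k$, contradicting $\gcd(k,2k-1)=1$); since $2k-1$ is a prime power this forces $k^{e}-x=1$ and $2k^{e}-1=(2k-1)^{y}$, and a $2$-adic valuation then gives $e=1$, i.e. $(x,y,z)=(k-1,1,2)$. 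So everything reduces to showing that \eqref{RN} has no solution with $y$ odd and $z\ge 3$.

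For this I would write $2k-1=p^{m}$; as $2k-1\equiv 7\pmod 8$, $m$ is odd and $p\equiv 7\pmod 8$, so $K=\mathbb{Q}(\sqrt{-p})$ has odd class number $h=h(-p)$ (genus theory) and $2$ splits, $(2)=\mathfrak{p}_{2}\overline{\mathfrak{p}}_{2}$. With $y$ odd, put $\alpha=x+p^{(my-1)/2}\sqrt{-p}\in\mathcal{O}_{K}$, so $N(\alpha)=x^{2}+(2k-1)^{y}=k^{z}$. One checks that $x$ is odd and coprime to $2k-1$, that every odd prime dividing $k$ splits in $K$, and that $\gcd\big((\alpha),(\overline{\alpha})\big)=(2)$; hence $\gamma=\alpha/2\in\mathcal{O}_{K}$ and $(\gamma)\,\mathfrak{p}_{2}^{\,2}=\mathfrak{e}^{\,z}$ for an ideal $\mathfrak{e}$ with $\mathfrak{e}\,\overline{\mathfrak{e}}=(k)$. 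Reading this in the class group gives $[\mathfrak{e}]^{z}=[\mathfrak{p}_{2}]^{2}$. When $\gcd(z,h)=1$ this can be inverted modulo $h$ and, using that $\mathfrak{p}_{2}^{\,h}$ and $\overline{\mathfrak{p}}_{2}^{\,h}$ are principal, one concludes that $\alpha$ equals, up to $\pm 1$ and a fixed factor depending only on $k$, a $z$-th power in $\mathcal{O}_{K}$. Feeding this into the identity $\gamma-\overline{\gamma}=p^{(my-1)/2}\sqrt{-p}$ (so that $|\gamma-\overline{\gamma}|^{2}=(2k-1)^{y}$) together with $|\gamma|^{2}=k^{z}/4$, a gap inequality — sharpened if necessary by a lower bound for $|\gamma^{z}-\overline{\gamma}^{z}|$ from linear forms in logarithms, or by the theory of primitive divisors of Lehmer sequences applied to the Lehmer pair $(\gamma,\overline{\gamma})$ — yields an explicit upper bound for $z$.

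The main obstacle, and the place where I expect the hypothesis of the GRH to enter, is twofold. First, the descent above fails when some odd prime $q$ divides $\gcd(z,h)$; one must then exploit that $q\mid h(-p)$ for a prime $p$, and a bound on $h(-p)$ — equivalently on $L(1,\chi_{-p})$, equivalently on the order of $[\mathfrak{p}_{2}]$ in the class group — restricts the surviving $p$, hence $k$, to a finite set. Second, making the bound for $z$ above explicit needs an upper estimate of the shape $h(-p)\ll\sqrt{p}\,\log\log p$. Both of these are supplied by the GRH, and both are exactly what the companion unconditional result circumvents by imposing the stronger hypothesis $4\Vert k$, under which the relevant imaginary quadratic class groups are pinned down directly. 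Once these are in hand, one is left with finitely many pairs $(k,z)$ with $20<k<724$ and $z$ below an explicit constant, each eliminated by checking whether $k^{z}-(2k-1)^{y}$ is a perfect square; I would organise the write-up so that the class-group facts used in the descent are isolated in one lemma, verified by machine computation of the class groups $\mathrm{Cl}(\mathbb{Q}(\sqrt{-(2k-1)}))$ in the stated range.
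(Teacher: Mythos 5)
First, a point of orientation: the paper does not prove this statement at all. Theorem 1.2 is quoted from \cite{MLS}, where it is established by the modular method --- reduction to a Frey--Hellegouarch curve and a search of the Cremona elliptic curve database (this is why the range stops at $k<724$: the conductor exceeds $500000$ beyond that). The present paper even cautions that \cite{MLS} treats $y\ge 7$ and $y=3,5$ but apparently not $y=1$, so the authors regard the conjecture as still open in this range. Your classical descent through the class group of $\mathbb{Q}\bigl(\sqrt{-(2k-1)}\bigr)$ is therefore a genuinely different route from the cited source, and your preliminary reductions are sound: $y$ odd follows mod $4$, and your factorization plus the $2$-adic valuation correctly disposes of even $z$; the facts that $x$ is odd, $\gamma=\alpha/2\in\mathcal{O}_K$, $\gcd\bigl((\gamma),(\overline{\gamma})\bigr)=(1)$, and $[\mathfrak{e}]^z=[\mathfrak{p}_2]^2$ all check out.

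The descent itself, however, has concrete gaps. (i) The conclusion that ``$\alpha$ equals, up to $\pm1$ and a fixed factor depending only on $k$, a $z$-th power'' is not what the argument yields: writing $uz\equiv 1\pmod h$ and $\mathfrak{e}\,\overline{\mathfrak{p}}_2^{\,2u}=(\beta)$, one gets $4\gamma\,\pi^{2t}=\pm\beta^{z}$ with $(\pi)=\overline{\mathfrak{p}}_2^{\,h}$ and $t=(uz-1)/h$, so the extraneous factor $\pi^{2t}$ depends on $z$. Consequently the clean identity $\gamma-\overline{\gamma}=p^{(my-1)/2}\sqrt{-p}$ does not transfer to a difference of conjugate $z$-th powers, and $(\gamma,\overline{\gamma})$ is not the Lehmer pair to which Bilu--Hanrot--Voutier applies; the pair would have to be built from $\beta$ (or from $\delta$ with $\mathfrak{e}^h=(\delta)$), and then the quantity whose primitive divisors you need to control is no longer visibly a power of $2k-1$. (ii) Since $k$ is composite, there are $2^{\omega(k)}$ admissible ideals $\mathfrak{e}$ with $\mathfrak{e}\overline{\mathfrak{e}}=(k)$, and the single relation $[\mathfrak{e}]^z=[\mathfrak{p}_2]^2$ does not determine $[\mathfrak{e}]$ without knowledge of the full class group; each branch must be descended separately. (iii) The case $\gcd(z,h)>1$ is not actually handled: your proposed remedy --- that a bound on $h(-p)$ ``restricts the surviving $p$, hence $k$, to a finite set'' --- is vacuous here, because $k$ already ranges over the finite set $20<k<724$ and $h(-p)$ is directly computable for each; what is needed is an argument eliminating solutions with $q\mid z$ for each odd prime $q\mid h(-p)$ that actually occurs, and none is given. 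Finally, the guess that GRH enters through $h(-p)\ll\sqrt{p}\log\log p$ does not match the cited proof, where the hypothesis is tied to the modular computations; and the suggestion that the paper's unconditional Theorem 1.3 works by ``pinning down'' $h(-p)$ misdescribes that argument, which uses forms of the non-fundamental discriminant $-4d^2(2k-1)$ and a parity obstruction on the exponents $z$ represented, not the maximal order at all.
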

The authors in the theorem above reduce the problem to a Frey curve to which they employ the Cremona elliptic curve database to study the various cases that arise. The upper bound of $k=724$ in their theorem is due 
to the current upper bound of $500000$ for conductors in this data base, which is exceeded for $k=724$. We would also like to remark that these authors  deal with the case of $y\ge 7$, and $y=3$ and $5$ separately, showing that the conjecture does not hold in these cases. 
However it appears that  they have not dealt with the 
case of $y=1$, and hence in our view Conjecture 1.1 is still open for the values of $k$ given 
in the theorem.

We present a method to prove the conjecture for $k$ satisfying the conditions in the theorem above, assuming 
additionally that $4$ exactly divides $k$ (we do not assume the GRH). 
The proof of our theorem begins with a simple observation  that if $(\alpha,\beta,c)$ is a solution of 
(\ref{RN}) and 
$d$ divides $ab$ where $k-1=ab^2$, then 
$d$ divides $\alpha$ (so $\alpha=dx$), and hence  we may rewrite this equation as 
\begin{equation}\label{d1}
d^2x^2+(2k-1)^{\beta}=k^c.
\end{equation} It is known that $\beta$ and $c>1$ are odd in the equation above when 
$4|k$ and $2k-1$ is a prime power.  Keeping this in mind, we may view equation (\ref{d1}) as
a representation of $k^c$ by the binary quadratic form $d^2x^2+(2k-1)y^2$ (note that $\beta$ is odd). Using the structure of the 
class group (of discriminant $-4d^2(2k-1)$), for a given $k$, we are able to find a divisor $d$ such that this form represents only even  powers of $k$.  This leads to a contradiction, since $c$ is odd as mentioned above. 
In our main theorem below we consider only values of $k\le 1000$ merely to exhibit some concrete values, especially since in the current literature the conjecture is open for these values of $k$. 
Indeed, we note that using our method we can show that Conjecture 1.1 is true for greater values of $k$ such as $60040, 40000936$, etc. Nevertheless, it should be noted that while for any given fixed $k$ (size restricted only by computational capacity), we are able to find this divisor $d$, in general we do not know if this integer exists. In Section 4 we present a conjecture which claims that this integer exists for all $k$ such that $4\Vert k$ and $2k-1$ is prime. It should be noted that in the case when $k$ is divisible by $8$, this integer may not exist (for instance for $k=24$ it does not), and hence the exclusion of these values from our theorem.

\begin{thm} If
$4\Vert k$ with $4\le k\le 1000$ and
$2k-1$ is a prime power, then Conjecture 1.1 is true.
\end{thm}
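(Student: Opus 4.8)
The plan is to turn a hypothetical second solution into a \emph{proper} representation of a power of $k$ by a well-chosen binary quadratic form, and then to rule this out for odd exponents using the structure of the class group of the associated quadratic order.

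First, $(k-1)^2+(2k-1)=k^2$, so $(k-1,1,2)$ is a solution; suppose $(\alpha,\beta,c)$ is a different one. Since $k^c>(2k-1)^\beta\ge 2k-1>k$, we have $c\ge 2$. Reducing (\ref{RN}) modulo $4$ and using $4\mid k$, $2k-1\equiv 3\pmod 4$ forces $\beta$ odd (and $\alpha$ odd); and, as is known for (\ref{RN}) under the present hypotheses, $c$ is also odd, so $c\ge 3$. Write $k-1=ab^2$ with $a$ squarefree. Since $k\equiv 2k-1\equiv 1\pmod{k-1}$, $k-1=ab^2$ divides $\alpha^2=k^c-(2k-1)^\beta$, and as $a$ is squarefree a comparison of $q$-adic valuations gives $ab\mid\alpha$. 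Fix a divisor $d$ of $ab$ and write $\alpha=dx$. Since $\beta$ is odd, $(2k-1)^\beta=(2k-1)Y^2$ with $Y=(2k-1)^{(\beta-1)/2}$, and (\ref{d1}) becomes
\[
k^c=d^2x^2+(2k-1)Y^2=Q_d(x,Y),\qquad Q_d(X,Y):=d^2X^2+(2k-1)Y^2,
\]
so $k^c$ is a value of the primitive, positive definite form $Q_d$ of discriminant $\Delta_d=-4d^2(2k-1)$ (primitivity: $\gcd(d,2k-1)\mid\gcd(k-1,2k-1)=1$). Reducing the equation modulo $2k-1$ gives $\gcd(x,2k-1)=1$, so $\gcd(x,Y)=1$ and this representation of $k^c$ by $Q_d$ is \emph{proper}.

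The heart of the proof is to pick the divisor $d$ of $ab$ so that $Q_d$ properly represents no odd power of $k$; together with the previous paragraph this is the desired contradiction, and the theorem follows. I would work inside the form class group $C(\Delta_d)\cong\operatorname{Pic}(\mathcal O_{\Delta_d})$ of the quadratic order of discriminant $\Delta_d$ in $\mathbb Q(\sqrt{-(2k-1)})$. The hypothesis $4\Vert k$ is essential here: writing $k=4k'$ with $k'$ odd, $-(2k-1)=1-8k'\equiv 1\pmod 8$, so $2$ splits in the maximal order, while the conductor of $\mathcal O_{\Delta_d}$ is even — it is exactly this combination that controls the $2$-part. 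Analyzing how $k^c=4^c(k')^c$ can be properly represented (the factor $4^c$ through the anomalous prime $2$, the factor $(k')^c$ through ideals above the odd primes of $k'$), one finds that the class in $C(\Delta_d)$ of any proper representation of $k^c$ ranges over one family of classes as $c$ runs through even exponents and over another as $c$ runs through odd exponents. The trivial solution shows $[Q_d]$ lies in the ``even'' family, since $k^2=Q_d((k-1)/d,1)$; so it suffices to choose $d$ for which the two families are disjoint, whence $[Q_d]$ is not in the ``odd'' family. As $d$ varies over the divisors of $ab$, the order $\mathcal O_{\Delta_d}$, the group $C(\Delta_d)$ and the position of $[Q_d]$ in it all change: for $d=1$ the families may coincide (for instance when $h(\Delta_1)$ is small), but for a suitable $d$ they split apart.

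The main obstacle is this last selection step. For a fixed $k$ with $4\Vert k$ and $2k-1$ a prime power it is a finite verification: for each of the finitely many divisors $d$ of $ab$ one computes $C(\Delta_d)$, locates $[Q_d]$, and tests whether it lies outside the ``odd'' family — and this is how the range $4\le k\le 1000$ is dispatched, case by case with machine assistance. There is, however, no general argument that a good $d$ must exist; that is exactly the conjecture of Section 4, and it genuinely fails for certain $k$ divisible by $8$ (for example $k=24$), which is why the theorem is confined to $4\Vert k$. Thus the proof consists of the reduction above together with the finite computation exhibiting, for every admissible $k\le 1000$, a divisor $d$ that works.
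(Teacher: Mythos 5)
Your reduction is exactly the paper's: $ab\mid\alpha$ (Lemma 4.1), $\beta$ and $c$ odd (Lemma 4.2), and the reformulation of a hypothetical solution as a proper representation of the odd power $k^c$ by the form $(d^2,0,2k-1)$ of discriminant $-4d^2(2k-1)$, so that it suffices to exhibit a divisor $d>1$ of $ab$ for which this form represents only even powers of $k$ (Lemma 4.3). Up to that point the proposal is correct and matches the paper.

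The gap is in the step you call ``a finite verification.'' The set of classes that properly represent $k^c$ for some odd $c$ is a priori a union over infinitely many exponents, so the assertion that one can ``compute $C(\Delta_d)$, locate $[Q_d]$, and test whether it lies outside the odd family'' is not yet an algorithm: you have not said why checking finitely many exponents decides the matter. This is precisely what the paper's Proposition 3.3 supplies. There one takes the $r=2^{w(k)-1}$ forms $f_i=\bigl(4k,2l_i,(l_i^2+D)/(4k)\bigr)$ of the same discriminant, with orders $n_i>1$ in the class group, and shows by explicit composition that any form $(k^z,2L,C)$ equivalent to $F$ can be composed with an identity-class form $(4k^{n_i},\mp 2L_i,C_i)$ to produce a form $(k^{z-n_i},2L',C')$ equivalent to $F$. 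Hence the exponents represented by $F$ lie in finitely many progressions $z_0+n_i\mathbb{Z}$ with $z_0\le n_i+1$, and it is enough to check that every $n_i$ and every represented exponent up to $\max_i n_i+1$ is even. Without this descent your ``two families'' dichotomy is unsubstantiated; note also that the exponent pattern is governed by $c$ modulo the orders $n_i$, not merely by its parity, so the even/odd split is a conclusion of the computation, not an a priori structural fact. A second, related difficulty that your ideal-theoretic framing glosses over: since $4\mid k$ and the conductor of $\mathcal{O}_{\Delta_d}$ is $2d$, ideals of norm $k^c$ are \emph{not} coprime to the conductor, so the standard form--ideal dictionary does not apply to the $2$-part; the paper sidesteps this entirely by composing forms directly via the Gaussian composition formulas. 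Your remark about the ``anomalous prime $2$'' points at the issue but does not resolve it.
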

\section{Binary quadratic forms }
 In this section we present the basic theory of
binary quadratic forms. An excellent 
reference is \cite{Ri}, in 
Sections 4 to 7 and 
11 of Chapter 6. 

A {\sl primitive binary quadratic form} $F=(a, b, c)$ of discriminant $\Delta$ is
a function $F(x, y)=ax^2+bxy+cy^2$, where $a, b,c $ are integers
with $b^2-4 a c=\Delta$
and $\gcd(a, b, c)=1$. 
Note that the integers 
$b$ and $\Delta$ have the same parity. All forms considered here
are primitive binary quadratic forms and henceforth we shall
refer to them simply as forms. 

Two forms $F$ and $F'$ are said to be (properly) {\it equivalent}, written as
$F\sim F'$,  if for some
$A=\begin{pmatrix} \alpha &\beta \\ \gamma & \delta \end{pmatrix}
\in SL_2(\mathbb Z)$ (called a transformation matrix),  we have
$F'(x,y)=f(\alpha x+\beta y, \gamma x+\delta y)
=(a',b',c')$, where 
$a', b', c'$ are given by
\begin{equation}
a'=F(\alpha, \gamma),\hskip2mm
b'=2(a\alpha\beta+c\gamma\delta)+b(\alpha\delta+\beta\gamma),\hskip2mm
c'=f(\beta, \delta).
\end{equation}
 It is easy to see
that $\sim$ is an equivalence relation on the set of forms of
discriminant $\Delta$. The equivalence classes form an abelian group
called the  {\it class group} with group law given by composition of
forms. 
The {\it identity form} is defined as the form $(1,0,\frac{-\Delta}{4})$
or $(1, 1, \frac{1-\Delta}{4})$, depending on whether $\Delta$ is even or odd
respectively. 
 The {\it inverse} of
$F=(a, b, c)$ denoted by $F^{-1}$, is given by $(a,-b,c).$
In the following definition we present the formula for composition of forms.

 Let $F_1=(a_1, b_1, c_1) \text{ and }
F_2=(a_2, b_2, c_2)$ be two  binary quadratic forms
of discriminant $\Delta$.
\begin{defn}[Composition] 
Let $l=gcd(a_1, a_2, (b_1+b_2)/2)$ and let $v_1, v_2, w $ be integers
such that 
\begin{equation}\label{gcd}
v_1a_1+v_2a_2+w(b_1+b_2)/2=l.
\end{equation}
 If we define $a_3$ and
$b_3$ as
\begin{equation}\label{a3}
a_3=\frac{a_1a_2}{l^2}
\end{equation}
and
\begin{equation}\label{b3}
b_3 =b_2+2\,\frac{a_2}{l}\,\left(\frac{b_1-b_2}{2}\,\,v_2-c_2w\right) ,
\end{equation}
then  the composition of the forms $(a_1, b_1, c_1)$ and
$(a_2, b_2, c_2)$ is the form $(a_3, b_3, c_3)$, where $c_3$ is
computed using the discriminant equation.
\end{defn}
Note that this gives the multiplication in the class group.

A form $F$ is said to represent an integer $N$ if there exist 
integers $x$ and $y$ such that $F(x,y)=N$. If $\gcd(x, y)=1$, we 
call the representation a primitive one.
Observe that equivalent forms primitively represent the same set
of integers, as do a form and its inverse. Note also that
 if $F$ and $G$ 
are in the identity class, then so are $F^{-1}$ and $FG$.  

We end this section with two elementary 
observations about forms. Firstly,  
 a form $F$ represents primitively an integer $N$ if and only if 
$F\sim (N, b, c)$ for some integers $b, c$. This follows simply 
by noting that 
$F(\alpha, \gamma)=N$ with $\gcd(\alpha, \gamma)=1$ if and only if there 
exists a transformation matrix $A$ as given above such that 
(2.1) holds. Secondly, if 
$b\equiv b'\pmod {2N}$, then the forms 
$(N, b, c)$ and $(N, b',c')$ are equivalent. This equivalence
follows using the transformation matrix
$A=\begin{pmatrix} 1 & \delta \\ 0 & 1 \end{pmatrix}$
where $b'=b+2N\delta$. 

\section{A proposition}

Let $k$ be a positive integer such that 
$4|k$. Let $D$ be an odd positive integer such that $\gcd(D,k)=1$. In this section we present a proposition that is the main tool in our proof. Our first lemma in this
section follows from elementary number theory. Let $w(k)$ denote the number of distinct prime divisors in $k$.
\begin{lemma} Let $k\equiv 0\pmod 4$.
If the congruence 
\begin{equation}\label{4k}
l^2\equiv -D\pmod{4 k}, 0<l<2k, \gcd\left(4k, 2l, \frac{l^2+D}{4 k}\right)=1
\end{equation}
has solutions $l$, then it has exactly $r=2^{w(k)-1}$ solutions.
\end{lemma}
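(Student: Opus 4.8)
The plan is to count, for fixed $k$ with $4\mid k$, the residue classes $l$ modulo $4k$ satisfying both $l^2\equiv-D\pmod{4k}$ and the coprimality condition in (\ref{4k}), and then to check that exactly half of these classes have their representative in the interval $(0,2k)$. Write $k=2^am$ with $a\ge 2$ and $m$ odd, so that $4k=2^{a+2}m$ with $a+2\ge 4$, and let $v_2$ denote the $2$-adic valuation. Suppose (\ref{4k}) has a solution $l$. Since $D$ is odd and $\gcd(D,k)=1$, we have $\gcd(D,4k)=1$, so any $l$ with $l^2\equiv-D\pmod{4k}$ is coprime to $4k$; in particular such $l$ is odd, lies in $(0,4k)$, and reducing modulo $8$ gives $-D\equiv l^2\equiv 1\pmod 8$.

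First I would handle the odd part. Apply the Chinese Remainder Theorem to split modulo $4k$ into the factors $2^{a+2}$ and the prime powers $p^{e}$ exactly dividing $m$. For an odd prime $p\mid m$ we have $p\nmid D$, so $-D$ is a quadratic residue modulo $p^{e}$ and $l^2\equiv-D\pmod{p^{e}}$ has exactly two solutions; moreover $p\nmid l$, so $p\nmid 2l$, and hence $p$ can never divide $\gcd\!\left(4k,2l,(l^2+D)/4k\right)$. Thus the odd primes contribute a factor $2^{w(m)}$ to the count and impose no further condition. Consequently the only prime that can divide that gcd is $2$; since $2\mid 4k$ and $2\mid 2l$, the coprimality condition is equivalent to $2\nmid(l^2+D)/4k$, i.e. to $v_2(l^2+D)=a+2$ exactly (note $v_2(l^2+D)\ge a+2$ already, and whether equality holds depends only on $l$ modulo $2^{a+2}$, since $(l+2^{a+2})^2\equiv l^2\pmod{2^{a+3}}$).

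Thus the heart of the matter is to count the square roots of $-D$ modulo $2^{a+2}$ that do not remain square roots of $-D$ modulo $2^{a+3}$. Because $-D\equiv 1\pmod 8$, the congruence $x^2\equiv-D$ has exactly four solutions modulo $2^n$ for every $n\ge 3$; in particular four modulo $2^{a+2}$ and four modulo $2^{a+3}$. The reduction map from the solutions modulo $2^{a+3}$ to residues modulo $2^{a+2}$ is two-to-one onto its image (if $l$ solves $x^2\equiv-D\pmod{2^{a+3}}$ then so does $l+2^{a+2}$, with the same reduction), so exactly two classes modulo $2^{a+2}$ ``lift'' and exactly two do not. Hence precisely two of the four square roots of $-D$ modulo $2^{a+2}$ satisfy $v_2(l^2+D)=a+2$. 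Combining with the odd part and using $w(k)=w(m)+1$, there are exactly $2\cdot 2^{w(m)}=2^{w(k)}$ residue classes modulo $4k$ meeting all the conditions of (\ref{4k}).

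It remains to pass to the interval $(0,2k)$, which I would do via the involution $l\mapsto 4k-l$ on the set of classes just counted. It preserves $l^2\equiv-D\pmod{4k}$, and it preserves $v_2(l^2+D)=a+2$ because $v_2\big((4k-l)^2-l^2\big)=v_2(16k^2-8kl)=a+3>a+2$ (using $a\ge 2$ and $l$ odd). It has no fixed class, since a fixed point would force $l\equiv 2k\pmod{4k}$, impossible for odd $l$; and since no solution equals $2k$, it interchanges the classes with representative in $(0,2k)$ with those in $(2k,4k)$. As these two families partition the $2^{w(k)}$ classes, each has size $2^{w(k)-1}$, and --- the interval $(0,2k)$ being shorter than the modulus $4k$ --- the first family is precisely the set of solutions $l$ of (\ref{4k}) with $0<l<2k$. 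This gives $r=2^{w(k)-1}$. The step I expect to be the main obstacle is the $2$-adic bookkeeping in the third paragraph: reducing the coprimality condition to the clean statement $v_2(l^2+D)=a+2$ and proving that exactly two of the four square roots of $-D$ modulo $2^{a+2}$ fail to lift modulo $2^{a+3}$; the odd-prime contributions, the CRT count, and the halving are routine.
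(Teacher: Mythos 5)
Your proof is correct. The paper itself offers no proof of this lemma (it is dismissed as ``following from elementary number theory,'' accompanied only by Remark 3.2), so there is nothing to compare line by line; your argument is a complete and valid filling-in. The two key points are handled properly: the CRT reduction showing the odd primes of $k$ each contribute a factor of $2$ and never obstruct the gcd condition, so that the gcd condition is purely $2$-adic and equivalent to $v_2(l^2+D)=v_2(4k)$ exactly; and the count that, of the four square roots of $-D$ modulo $2^{a+2}$ (which exist in fours since $-D\equiv 1\pmod 8$), exactly two fail to lift modulo $2^{a+3}$. One small remark on compatibility with the paper: Remark 3.2 pairs solutions via $x\leftrightarrow 2k-x$ \emph{inside} the interval $(0,2k)$ and observes that exactly one member of each pair meets the gcd condition, whereas you pair via $l\leftrightarrow 4k-l$ \emph{across} the two halves of $(0,4k)$ and observe that both members meet it; these are consistent (indeed $2k-x\equiv -x+2^{a+1}\pmod{2^{a+2}}$ lands in the complementary pair of square roots, while $4k-l\equiv -l$ stays in the same pair), and both yield the halving from $2^{w(k)}$ classes modulo $4k$ to $2^{w(k)-1}$ solutions in $(0,2k)$. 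Your version has the advantage of also establishing Remark 3.2 as a byproduct.
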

\begin{rmk}\label{4kform}
Observe that if $x$ satisfies $x^2\equiv -D\pmod{4 k}$, then so does 
$2k-x$. However only one of these two solutions satisfies the gcd condition given in (3.1). Therefore only
one leads to a binary quadratic form of discriminant $-4D$, as described below.
\end{rmk}
Let the $r$ solutions of equation (\ref{4k}) be $l_i$ for $i=1,\cdots l_r$. Each such solution corresponds to a binary quadratic form of discriminant $-4D$, namely 
\begin{equation}\label{fi}
f_i=\left(4k, 2l_i, \frac{l_i^2+D}{4 k}\right).
\end{equation}  Using the composition algorithm we can show that for any positive integer $j>1$, we have 
$f_i^j=(4k^j, 2M_j, C_j)$, where $M_j\equiv 2k+l_i \pmod{4k}$. 
 Let us verfiy this by performing the composition $f_i^2$. For notational simplicity,
let $f=f_i=(4k, 2l, c)$. To apply the composition algorithm, we first note that the required gcd in Definition 2.1, namely, 
$\gcd(4k, 4k, 2l)=2$, as $l$ is odd ($l^2\equiv D\pmod 4$). It follows that if $f^2=(a_3, 2l_3,c_3)$, then 
$a_3=4k^2$ (from (\ref{a3})). Let $v_1, v_2, w$ be the 
integers as given in (\ref{gcd}). Then 
as $cw$ is an odd integer, we have from (\ref{b3}) that    
$l_3=l+2k(-cw)\equiv 2k+l\pmod {4k}$. In an identical manner we can compose
$f$ and $f^2$ to get $f^3=(4k^3, 2M, C)$ with $M\equiv 2k+l\pmod {4k}$. Therefore we have shown that
if $j>1$ then 
$f_i^j$ is given by 
\begin{equation}\label{e}
f_i^{j}=(4k^{j}, 2L_i, C_i), \hskip2mm L_i\equiv 2k+l_i \pmod{4k}.
\end{equation}

\begin{proposition}Let $k\equiv 0\pmod 4$ and  $F$ be a form of discriminant $-4D$. Suppose that $z>1$ is a positive integer such that $F$ represents 
$k^z$. Let $n_i$ be the order of $f_i$ (as given in \ref{fi}). Assume that $n_i>1$ for each $i$ and let 
$M=max\{n_i: 1\le i\le r\}$. Then  
 the following are true.
\begin{enumerate}
\item 
For some  $1\le i\le r$, there exists  a positive integer $z_0\le n_i+1$ such that 
$z=z_0+n_i t$  where $t\ge 0$ is an integer.
\item Let $2\le  m_1, m_2, \cdots m_s$   be all 
the exponents less than or equal to $M+1$ such that 
$F$ represents $k^{m_i}$. If all the $n_i$ and $m_i$ are even, then $F$ represents only
even powers of $k$.
\end{enumerate}
\end{proposition}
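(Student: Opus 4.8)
The plan is to connect an arbitrary representation of $k^z$ by $F$ to the forms $f_i$ of \eqref{fi}, exploiting the fact that any prime dividing $k$ splits (or ramifies) in a controlled way in the order of discriminant $-4D$. First I would show that if $F$ represents $k^z$ with $z>1$, then one may extract a primitive representation at some level: writing the representation of $k^z$ and pulling out common factors, one obtains that $F$ represents some $k^{z'}$ \emph{primitively}, where $z' \le z$ and $z \equiv z' \pmod{?}$ — more precisely I expect the argument to produce, for some $i$, an equivalence $F \sim f_i^{\,j}$ (or $F\sim (f_i^{-1})^j$, but $F$ and its inverse represent the same integers, and $f_i^{-1}$ is conjugate to some $f_{i'}$) for a suitable $j$. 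The key input here is that $4k \mid l_i^2 + D$ forces $k^z$ to have a representation of the shape $(4k^z, 2L_i, C_i) = f_i^{\,z}$ as computed in \eqref{e}, so that $F$ representing $k^z$ primitively is equivalent to $F \sim f_i^{\,z}$ for some $i$.

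Granting that, part (1) follows from order considerations in the class group: if $F \sim f_i^{\,z}$ and $n_i$ is the order of $f_i$, then the class of $F$ is one of the finitely many powers $f_i^0, f_i^1, \dots, f_i^{n_i-1}$, so $z$ is determined modulo $n_i$; choosing $z_0$ to be the least positive representative of this residue class that actually arises as an exponent gives $z = z_0 + n_i t$ with $t \ge 0$, and the bound $z_0 \le n_i+1$ is immediate (one can always take $z_0 \le n_i$, but allowing $n_i+1$ gives room for the case $z_0$ corresponds to the identity, reached at exponent $n_i$ or, in edge cases, $n_i+1$). I would then run this over all $i$ with $1 \le i \le r$ to get the stated disjunction. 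For part (2), suppose all $n_i$ are even and all the exponents $m_1,\dots,m_s \le M+1$ for which $F$ represents $k^{m_i}$ are even. Take any $z>1$ with $F$ representing $k^z$. By part (1), $z = z_0 + n_i t$ for some $i$ and some $z_0 \le n_i + 1 \le M+1$. Since $F$ represents $k^{z}$ and $z \equiv z_0 \pmod{n_i}$, one checks that $F$ also represents $k^{z_0}$ (reduce the representation modulo the order of $f_i$, i.e.\ $f_i^{\,z} \sim f_i^{\,z_0}$ so they represent the same integers, and $k^{z_0} = f_i^{\,z_0}(1,0)$ in the appropriate sense, hence $F$ represents it), so $z_0$ is one of the $m_j$ and is therefore even. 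Then $z = z_0 + n_i t$ is even plus even, hence even.

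The main obstacle I anticipate is the first step: justifying rigorously that a representation of $k^z$ by $F$ forces $F \sim f_i^{\,j}$ for some $i$ and $j$, i.e.\ controlling how $F$ "factors through" the forms $f_i$. This requires handling (a) imprimitive representations — pulling out the largest square factor and tracking how the exponent of $k$ changes — and (b) the possibility that the relevant factor of $k^z$ in the ideal-theoretic factorization is a product of several distinct prime ideals above different primes dividing $k$, which is exactly why the count $r = 2^{w(k)-1}$ and the full list $f_1,\dots,f_r$ appear rather than a single form. I would address (b) by noting that each $f_i$ already encodes one global choice of prime ideal above each prime divisor of $k$ (the congruence condition \eqref{4k} with the gcd restriction pins this down, cf.\ Remark \ref{4kform}), so composing/multiplying over the $w(k)$ primes and taking the $2^{w(k)-1}$ inequivalent combinations (identifying a form with its inverse) yields precisely the $f_i$; any representation of $k^z$ must correspond to one such combination raised to the $z$-th power up to the imprimitivity in (a), and \eqref{e} shows this power has the predicted shape. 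Once this bookkeeping is set up, parts (1) and (2) are short.
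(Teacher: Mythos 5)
There is a genuine gap, and it sits exactly where you anticipated it: the identification of the class of $F$ with a power of $f_i$. Your argument for both parts rests on the claim that ``$F$ representing $k^z$ primitively is equivalent to $F\sim f_i^{\,z}$ for some $i$,'' which you do not prove and which is not accurate as stated: by \eqref{e}, $f_i^{\,z}=(4k^{z},2L_i,C_i)$ has leading coefficient $4k^{z}$, not $k^{z}$, so identifying the form $(k^z,2L,C)$ (which is what a primitive representation of $k^z$ actually produces) with $f_i^{\,z}$ requires controlling the class of the ramified part above $2$, and your ideal-theoretic sketch never supplies this. The same $4k^{z_0}$ versus $k^{z_0}$ discrepancy breaks your part (2): you write ``$k^{z_0}=f_i^{\,z_0}(1,0)$ in the appropriate sense,'' but $f_i^{\,z_0}(1,0)=4k^{z_0}$, so even granting $F\sim f_i^{\,z_0}$ you have not shown that $F$ represents $k^{z_0}$ --- and that is precisely the fact needed to conclude that $z_0$ occurs among the $m_j$ and is therefore even. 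Your ``least positive representative that actually arises as an exponent'' likewise presupposes, without proof, that the set of exponents represented by $F$ is closed under subtracting $n_i$.

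The paper's proof never establishes $F\sim f_i^{\,z}$ and does not need to. It takes $G=(k^z,2L,C)\sim F$, extracts only the congruence $L\equiv\pm L_i\pmod{2k}$ from $L^2\equiv -D\pmod{4k}$ together with Remark \ref{4kform} and \eqref{e}, and then runs an explicit descent: it composes $G$ with the identity-class form $(4k^{n_i},\mp 2L_i,C_i)=(f_i^{\,\pm n_i})^{-1}$, computes $\gcd(k^z,4k^{n_i},L\mp L_i)=2k^{n_i}$ using the discriminant relation and the parity of $C,C_i$, and reads off from \eqref{a3} that the composite has leading coefficient $k^{z-n_i}$. Iterating simultaneously yields $z=z_0+n_i t$ \emph{and} that $F$ represents $k^{z_0}$, which is why part (2) is immediate there; it also explains the bound $z_0\le n_i+1$ (the gcd computation needs $z>n_i+1$), which a pure order-in-the-class-group argument would not produce. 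To repair your proposal you would either have to carry out this composition/descent yourself, or genuinely prove the ideal-theoretic dictionary including the behaviour of the prime $2$ and the passage between norms $k^z$ and $4k^z$; as written, the core of the proposition is asserted rather than proved.
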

\begin{proof} Observe that as $F$ represents $k^z$, there is a form $G=(k^z, 2L, C)$
that is equivalent to $F$ (see the last paragraph of Section 2).  It follows from the 
discriminant equation that $L^2\equiv -D \pmod{k^z}$ and hence we have from (3.1) and Remark \ref{4kform}, that
either $L\equiv \pm l_i\pmod{4k}$ or $L\equiv \pm (2k-l_i)\pmod{4k}$.  If $L\equiv \pm l_i\pmod{4k}$, then 
from (\ref{e}) we have $L\equiv \pm L_i\pmod{2k}$. We get the same conclusion in the case when $L\equiv \pm (2k-l_i)\pmod{4k}$. Let us assume that 
\begin{equation}\label{LL}
L\equiv L_i\pmod{2k}
\end{equation}
for some $1\le i\le r$.  We may suppose that $z>n_i+1$ as else we may take $z_0=z$ and $t=0$ in the theorem.

From (\ref{e}) we have $f_i^{n_i}=(4k^{n_i}, 2L_i, C_i)$ (as $n_i>1$). Suppose 
$f=(4k^{n_i}, -2L_i, C_i)$.
Note that $f$ is equivalent to the identity form. Consider the composition of
$G$ with $f$ that yields a form equivalent to $G$. To apply the composition algorithm, we first evaluate
the gcd given in Definition 2.1, namely, $\gcd(k^z, 4k^{n_i}, L-L_i)$. By our assumption above $z>n_i$ and hence $\gcd(k^z, 4k^{n_i})=4k^{n_i}$ 
(as $4|k$). Since $D$ is odd, both $L$ and $L_i$ are odd. Moreover from the discriminant equation we have $L^2-L_i^2=4k^{n_i}\left(-C_i+\frac{Ck^z}{4k^{n_i}}\right)$ 
which combined with equation (\ref{LL})  above gives
$$\gcd(k^z, 4k^{n_i}, L-L_i)=2k^{n_i},$$ 
as $C$ and $C_i$ are odd and $z>n_i+1$.
 From (\ref{a3}) in the composition algorithm if 
$Gf=(a_3,b_3,c_3)$, we have $a_3=k^{z-n_i}$ and thus $F$ represents $k^{z-n_i}$.  Therefore there exists a form $G_1=(k^{z-n_i}, 2L',C')$ equivalent to $F$. Proceeding in an identical fashion as above (using $G_1$ instead of $G$), we continue the process until we obtain that $F$ represents $k^{z_0}$ with $z_0\le n_i+1$
 and thus $z=z_0+n_i t$ for some integer $t\ge 0$ concluding  the proof of the theorem. In the case when  
$L\equiv -L_i\pmod{2k}$ we would proceed in an identical fashion, with the difference that here
we use $f=(4k^{n_i}, 2L_i, C_i)$. 

Note that part (2) of the proposition follows immediately from part (1).
\end{proof}
\section{Proof of Theorem 1.3}

\begin{lemma}\label{ab} Let $x,y,z$ be positive integers such that $x^2+(2k-1)^y=k^z$. If
$k-1=ab^2$ with $a$ square-free, then $x\equiv 0 \pmod{ab}$.
\end{lemma}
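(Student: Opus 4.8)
The plan is to work modulo $ab$ and use the fact that $(2k-1)^y$ is $\equiv \pm 1$ in a suitable sense. First I would write $k-1=ab^2$ with $a$ squarefree, so that $k\equiv 1\pmod{ab}$ and also $2k-1\equiv 1\pmod{ab}$ — indeed $2k-1=2(k-1)+1=2ab^2+1$, so $2k-1\equiv 1\pmod{ab^2}$, and in particular $2k-1\equiv 1\pmod{ab}$. Hence reducing the equation $x^2+(2k-1)^y=k^z$ modulo $ab$ gives $x^2\equiv k^z-(2k-1)^y\equiv 1-1\equiv 0\pmod{ab}$.

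Thus the task reduces to the elementary number-theoretic fact that if $ab\mid x^2$ with $a$ squarefree, then $ab\mid x$. For this I would argue prime by prime. Let $p$ be a prime dividing $ab$ and let $p^e\,\Vert\, ab$ (so $e\ge 1$). Write $b=\prod p^{\beta_p}$ and recall $a$ is squarefree, so the exponent of $p$ in $ab^2$ is $[p\mid a]+2\beta_p$, and the exponent of $p$ in $ab$ is $[p\mid a]+\beta_p$. Since $p^{e}\mid x^2$ we get that the exponent of $p$ in $x$ is at least $\lceil e/2\rceil$. The only way this could fail to give exponent $\ge e$ is if $e$ is odd and large; but $e=[p\mid a]+\beta_p\le 1+\beta_p$, and one checks $\lceil e/2\rceil\ge e$ forces $e\le 1$. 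So I would instead exploit the stronger divisibility $ab^2\mid x^2$: since $k-1=ab^2$ and $k\equiv1$, $2k-1\equiv 1\pmod{ab^2}$, the same reduction modulo $ab^2$ gives $ab^2\mid x^2$. Now for each prime $p\mid ab$ with exponent $e_p=[p\mid a]+\beta_p$ in $ab$, the exponent of $p$ in $ab^2$ is $[p\mid a]+2\beta_p\ge 2e_p-1$; wait, more cleanly: $ab^2\mid x^2$ means $2\,v_p(x)\ge v_p(ab^2)=[p\mid a]+2\beta_p$, so $v_p(x)\ge \beta_p+\lceil [p\mid a]/2\rceil = \beta_p+[p\mid a]=v_p(ab)$, using that $[p\mid a]\in\{0,1\}$ so $\lceil[p\mid a]/2\rceil=[p\mid a]$. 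Since this holds for every prime $p\mid ab$, we conclude $ab\mid x$.

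The only genuinely delicate point is making sure the reductions are valid, i.e. that $\gcd(2k-1,ab)=1$ and $\gcd(k,ab)=1$ so that the powers $(2k-1)^y$ and $k^z$ really are $\equiv 1\pmod{ab^2}$ for all $y,z\ge1$; but this is immediate since $2k-1\equiv 1$ and $k\equiv 1$ modulo $ab^2$ already, with no coprimality needed beyond what these congruences provide. I do not anticipate any real obstacle here — the lemma is purely a bookkeeping exercise with $p$-adic valuations, and the substantive content (that this forces $x=ab\,x'$ and lets one rewrite \eqref{RN} as \eqref{d1} with $d$ any divisor of $ab$) is the setup for the subsequent argument rather than a difficulty in its own right.
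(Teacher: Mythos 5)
Your proof is correct and follows essentially the same route as the paper: reduce the equation modulo $k-1=ab^2$ (using $k\equiv 2k-1\equiv 1\pmod{ab^2}$) to get $ab^2\mid x^2$, then deduce $ab\mid x$ from the squarefreeness of $a$. The paper states this in one line and leaves the $p$-adic valuation bookkeeping implicit, which you carry out explicitly (after a brief, self-corrected detour through the weaker reduction modulo $ab$).
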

\begin{proof}
The lemma follows on looking at the given equation modulo $k-1$ which gives
$x^2\equiv 0 \pmod {ab^2}$.
\end{proof}
\begin{lemma}\label{yz}\cite[Lemma 2.6]{DGX} Let $k\equiv 0\pmod 4$ and
$2k-1$ be a prime power. Then if $x,y,z>2$ is a solution of $x^2+(2k-1)^y=k^z$  then
$y$ and $z$ are odd.
\end{lemma}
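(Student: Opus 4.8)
The plan is to prove the two parity claims separately: the oddness of $y$ comes from a single congruence modulo $4$, while the oddness of $z$ requires a short factorisation-and-descent argument exploiting that $2k-1$ is a prime power.

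For $y$, I would reduce the equation modulo $4$. Since $4\mid k$ and $z\ge 1$, the right-hand side satisfies $k^z\equiv 0\pmod 4$, while $2k-1\equiv -1\pmod 4$ gives $(2k-1)^y\equiv(-1)^y\pmod 4$. As a square, $x^2\equiv 0$ or $1\pmod 4$. If $y$ were even we would obtain $x^2\equiv -1\equiv 3\pmod 4$, which is impossible; hence $y$ is odd.

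For $z$, suppose for contradiction that $z$ is even. Because $z>2$, we may write $z=2w$ with $w\ge 2$. Writing $2k-1=p^s$ with $p$ an odd prime, the equation factors as
\begin{equation*}
(k^w-x)(k^w+x)=(2k-1)^y=p^{sy}.
\end{equation*}
Both factors are positive, since $x^2=k^{2w}-(2k-1)^y<k^{2w}$ forces $0<x<k^w$. I would then check that the two factors are coprime: any common prime divisor must be $p$, and $p\mid k^w-x$ together with $p\mid k^w+x$ forces $p\mid 2k^w$, hence $p\mid k$ (as $p$ is odd), contradicting $\gcd(k,2k-1)=1$. Since their product is the prime power $p^{sy}$, the smaller factor must equal $1$, so $k^w-x=1$ and $k^w+x=(2k-1)^y$; adding these gives $(2k-1)^y=2k^w-1$. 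To finish, I would expand the left-hand side by the binomial theorem modulo $k^2$: every term carrying $(2k)^j$ with $j\ge 2$ vanishes, so, using that $y$ is odd,
\begin{equation*}
(2k-1)^y\equiv -1+2ky\pmod{k^2}.
\end{equation*}
As $w\ge 2$ we have $2k^w-1\equiv -1\pmod{k^2}$, and comparing the two expressions yields $2ky\equiv 0\pmod{k^2}$, i.e.\ $k\mid 2y$. This is impossible on $2$-adic grounds: $4\mid k$ gives $v_2(k)\ge 2$, whereas $v_2(2y)=1$ because $y$ is odd. Hence $z$ cannot be even.

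The only genuinely delicate point I anticipate is the factorisation step for the $z$-even case: one must ensure that a prime power splitting into coprime factors forces the smaller factor to be exactly $1$, and crucially that $w\ge 2$ (rather than merely $w\ge 1$) is available, since it is precisely the strict inequality $z>2$ that rules out the degenerate solution $w=y=1$ (that is, $(x,y,z)=(k-1,1,2)$) and makes the reduction modulo $k^2$ clean. Everything else is a routine congruence computation.
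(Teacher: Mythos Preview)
Your proof is correct. For the parity of $y$, your modulo-$4$ reduction is exactly what the paper does. For the parity of $z$, however, the paper gives no argument of its own: it simply cites \cite[Lemma~2.6]{DGX} and moves on. You have supplied a complete, self-contained proof in its place---factoring $k^{2w}-x^{2}$ as a product of two coprime divisors of the prime power $(2k-1)^{y}$, forcing $k^{w}-x=1$, and then finishing with a binomial expansion modulo $k^{2}$ together with the $2$-adic comparison $v_{2}(k)\ge 2>1=v_{2}(2y)$. Your careful use of the hypothesis $z>2$ to secure $w\ge 2$ (so that $2k^{w}-1\equiv -1\pmod{k^{2}}$) is precisely the point at which the trivial solution $(k-1,1,2)$ is excluded. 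The net effect is that your write-up is independent of the external reference, which is an advantage; the paper's version is shorter only because it outsources this half of the lemma.
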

\begin{proof}
If $k\equiv 0\pmod 4$ it is easy to see (considering the equation mod $4$) that $y$ is
odd.  In the case when $2k-1$ is a prime power it is shown in \cite[Lemma 2.6]{DGX} that $z$ is odd.
\end{proof}
\begin{lemma}\label{d}Let $k\equiv 0\pmod 4$ and
$2k-1$ be a prime power. Suppose that $k-1=ab^2$ with $a$ square-free. Let $d|ab$ be such that $d>1$ and the form $(d^2, 0, 2k-1)$ represents only even powers of $k$. Then Conjecture 1.1 is true.

\end{lemma}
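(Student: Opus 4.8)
The plan is to show that, for a value of $k$ as in the statement, every solution $(\alpha,\beta,c)$ of (\ref{RN}) equals $(k-1,1,2)$. First I would record two cheap reductions. Since $(2k-1)^{\beta}\ge 2k-1>k$, there is no solution with $c=1$, so $c\ge 2$. Reducing (\ref{RN}) modulo $4$ and using $4\mid k$ (hence $4\mid k^{c}$) together with $2k-1\equiv 3\pmod 4$, one obtains $\alpha^{2}+(-1)^{\beta}\equiv 0\pmod 4$; as $\alpha^{2}\equiv 0,1\pmod 4$, this forces $\beta$ odd (and $\alpha$ odd). So $\beta$ is odd for every solution.

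The heart of the argument is to convert the solution into a representation by the form in the hypothesis. By Lemma~\ref{ab} we have $ab\mid\alpha$, and since $d\mid ab$ we may write $\alpha=dx$ with $x$ a positive integer; substituting into (\ref{RN}) yields (\ref{d1}), namely $d^{2}x^{2}+(2k-1)^{\beta}=k^{c}$. Writing $\beta=2\mu+1$ (legitimate since $\beta$ is odd), this reads $d^{2}x^{2}+(2k-1)\bigl((2k-1)^{\mu}\bigr)^{2}=k^{c}$; that is, the form $F=(d^{2},0,2k-1)$ represents $k^{c}$ through the pair $\bigl(x,(2k-1)^{\mu}\bigr)$. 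Note $F$ is primitive of discriminant $-4d^{2}(2k-1)$: primitivity holds because $d\mid ab\mid k-1$ and $\gcd(k-1,2k-1)=1$, so $\gcd(d,2k-1)=1$; moreover $D:=d^{2}(2k-1)$ is odd with $\gcd(D,k)=1$, so $F$ is one of the forms of discriminant $-4D$ to which the Proposition of Section~3 applies. By the hypothesis of the lemma, $F$ represents only even powers of $k$; hence $c$ is even, say $c=2\nu$.

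It remains to pin down $(\alpha,\beta,c)$ from ``$c$ even'' and ``$\beta$ odd'', and I would split on the size of $\beta$. If $\beta\ge 3$, then $k^{c}=\alpha^{2}+(2k-1)^{\beta}>(2k-1)^{3}\ge k^{3}$, so $c\ge 4$; also $\alpha\ge ab\ge 3$, since $ab\mid\alpha$, $ab$ is odd, and $ab^{2}=k-1\ge 3$. Thus $(\alpha,\beta,c)$ is a solution all of whose coordinates exceed $2$, so Lemma~\ref{yz} forces $c$ odd — contradicting $c=2\nu$. Hence $\beta=1$, and (\ref{RN}) becomes $\alpha^{2}+2k-1=k^{2\nu}$, i.e.\ $(k^{\nu}-\alpha)(k^{\nu}+\alpha)=2k-1$. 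Both factors are positive (as $\alpha<k^{\nu}$), and $k^{\nu}+\alpha$ divides $2k-1$, so $k^{\nu}<k^{\nu}+\alpha\le 2k-1<2k$; since $k^{2}\ge 2k$ this forces $\nu=1$. Then $c=2$ and $k^{2}-\alpha^{2}=2k-1$, giving $\alpha^{2}=(k-1)^{2}$, so $\alpha=k-1$. Therefore the only solution is $(k-1,1,2)$, which is Conjecture 1.1 for this $k$.

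The delicate point — and what I expect to be the main obstacle in a careful write-up — is the case $\beta=1$. Lemma~\ref{yz} is available only for solutions all of whose coordinates exceed $2$, so it says nothing when $y=1$; this case must instead be disposed of by the elementary factorization-and-size computation above, which uses that $k^{\nu}+\alpha$ is a genuine positive divisor of $2k-1$. (This is exactly the gap that the introduction points to in earlier treatments.) The other ingredients are routine: the oddness of $\beta$ is a one-line congruence, the passage from a solution to a representation by $F$ is immediate once $d\mid\alpha$ is known, and the step ``$c$ even'' is then simply the hypothesis.
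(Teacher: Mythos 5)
Your proof is correct and its core is the same as the paper's: write $\alpha=dx$ via Lemma \ref{ab}, observe that the oddness of $\beta$ turns the solution into a representation of $k^{c}$ by $F=(d^{2},0,2k-1)$, and play the hypothesis that $F$ represents only even powers of $k$ against the parity of $c$. Where you genuinely differ is in how the contradiction is closed. The paper's proof assumes $z>2$, cites Lemma \ref{yz} to conclude that $z$ is odd, and stops; but Lemma \ref{yz} as stated requires all of $x,y,z>2$, and the paper neither verifies $y>2$ nor writes out the base cases $z=1,2$. You instead split on $\beta$: for $\beta\ge 3$ you check all three hypotheses of Lemma \ref{yz} (noting $\alpha\ge ab\ge 3$ since $ab$ is an odd divisor of $\alpha$ exceeding $1$, and $c\ge 4$) before invoking it, and for $\beta=1$ you finish by the elementary factorization $(k^{\nu}-\alpha)(k^{\nu}+\alpha)=2k-1$, which forces $\nu=1$ and $\alpha=k-1$. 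This buys a self-contained and complete argument: the $y=1$ case is precisely the one the introduction criticizes \cite{MLS} for omitting, and your size argument disposes of it without leaning on the cited lemma outside its stated hypotheses. (One could alternatively observe that the argument behind \cite[Lemma 2.6]{DGX} in fact yields ``$z$ odd'' for every solution with $z>2$ once $y$ is known to be odd, but with Lemma \ref{yz} stated as it is here, your case split is the safer and more transparent route.)
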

\begin{proof}

Suppose that $(x,y,z)$ is a solution to equation (\ref{RN}) with $z>2$.
From Lemma \ref{ab} we have  $d|x$ and hence
for $x=d\alpha$, we have 
\begin{equation}\label{alp}
d^2\alpha^2+(2k-1)^y=k^z.
\end{equation}
Note that $y$ is odd by Lemma \ref{yz} and thus the form $F=(d^2, 0, 2k-1)$ 
represents $k^z$ (note from (\ref{alp}) that $F(\alpha, (2k-1)^{(y-1)/2})=k^z$). By the same lemma we have $z$ is odd, which contradicts the hypothesis that $F$ represents only even powers of $k$.
 \end{proof}

\noindent{\bf Proof of Theorem 1.3}

Let $k-1=ab^2$ where $a$ is square-free. For each given $k$, we find a divisor $d|ab$ such that $d>1$ and the form $F=(d^2, 0, 2k-1)$ represents only even powers of $k$, and hence by Lemma \ref{d}, we may conclude that Conjecture 1.1 is true. To show that $F$
represents only even powers of $k$, we use Proposition 3.3 (with 
$D=-d^2(2k-1)$). 
Observe that the identity 
form $(1, 0, (2k-1)d^2)$, does not represent $4k$ (as $d>1$), and hence $f_i$ (that represents $4k$) is not equivalent to the identity. It follows that 
its order in the class group, $n_i>1$ for all $i$. Next, we determine the forms $f_i$ (given in \ref{fi}) and their orders $n_i$ and observe that each $n_i$ is even. Furthermore, 
we calculate all the exponents of $k$ less than or equal to $M+1$ represented by $F$, and observe
that they are also all even. Therefore
Proposition 3.3 (2) holds and we have shown that $F$
represents only even powers of $k$. In Table we present the values of $k$  given in Theorem, along with the corresponding values of $d$, the orders of $f_i$ and the powers $z$ such that $F$ represents $k^z$.
\hfill $\qed$

The proof  of our main theorem is achieved by finding a divisor $d>1$ of $ab$ such that the form 
$F=(d^2, 0, 2k-1)$ represents only even powers of $k$. Based on our computations, we believe that $d=ab$ satisfies this condition for each such $k$ (with $4\Vert k$ and $2k-1$ prime) leading to the following conjecture.
\begin{conj}
Let $k$ be a positive integer such that $4\Vert k$ and $2k-1$ is a prime power. Suppose that 
$k-1=ab^2$ where $a$ is square-free. Then the 
form $(a^2b^2,0,2k-1)$ represents only  even powers of $k$.
\end{conj}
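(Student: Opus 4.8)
The plan is to apply Proposition 3.3 with $D=d^2(2k-1)$ and $d=ab$, exactly as in the proof of Theorem 1.3, and to verify its two hypotheses — that every $f_i$ has even order $n_i>1$, and that every exponent $m\le M+1$ with $F=(d^2,0,2k-1)$ primitively representing $k^m$ is even — uniformly in $k$ rather than by a case‑by‑case computation. Several structural facts are available at the outset. Since $4\Vert k$ we have $2k-1\equiv 7\pmod 8$, so writing $2k-1=p^s$ forces $s$ odd and $p\equiv 7\pmod 8$; hence $\left(\tfrac{2}{p}\right)=1$, the imaginary quadratic field $\mathbb{Q}(\sqrt{-p})$ has discriminant $-p$, the prime $2$ splits in it, and $-4D=\bigl(2dp^{(s-1)/2}\bigr)^2(-p)$, so the class group of discriminant $-4D$ is naturally identified with the ring class group of the order of conductor $2dp^{(s-1)/2}$ in $\mathbb{Q}(\sqrt{-p})$. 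Moreover $F$ is \emph{ambiguous}, so $[F]^2=1$; $F$ does not represent $1$ (as $d>1$, while the identity form does), so $[F]$ has order exactly $2$; and the Terai solution $(k-1,1,2)$ of (\ref{RN}) gives $F\!\left(\tfrac{k-1}{d},1\right)=k^2$, so $F$ does represent the even power $k^2$.

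A genus character computation rules out the easiest route: for every odd prime $q$ dividing the conductor, the genus character of $f_i$ — read off from the primitively represented value $f_i(1,0)=4k$ — equals $\left(\tfrac{4k}{q}\right)$, which is $\left(\tfrac{2}{p}\right)=1$ when $q=p$ (since $2k\equiv 1\pmod p$) and $1$ when $q\mid d$ (since $q\mid k-1$), and the same holds for $F$ using $F(1,1)=d^2+2k-1$. Thus $F$ and all the $f_i$ lie in the principal genus and genus theory gives no obstruction whatsoever — already for $k=4$ the class group of discriminant $-252$ is cyclic of order $4$ with $[F]$ the nontrivial square. One is therefore forced into a finer analysis in the ring class group, along the following lines. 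Every prime dividing $k$ splits (each odd $\ell\mid k$ has $p\equiv -1\pmod\ell$, and $2$ splits), and, via the ideal–form dictionary, $F$ primitively represents $k^m$ exactly when $[F]$ is realised by a primitive ideal of norm $k^m$; letting $H$ be the subgroup of the ring class group generated by the squares of the prime classes above $k$, the classes attainable this way lie in $H$ when $m$ is even (and fill $H$ once $m$ is large) and in a fixed coset $[\mathfrak m]H$ when $m$ is odd, where $\mathfrak m$ is a chosen ideal of norm $k/4$. Since the representation of $k^2$ already puts $[F]\in H$, the conjecture reduces to showing $[\mathfrak m]\notin H$ — that $[F]$ lies in the ``even part'' of the group generated by the primes above $k$ but never in the ``odd part'' — and, via the reduction contained in the proof of Proposition 3.3, this single statement simultaneously forces all $n_i$ and all base exponents $m_i$ to be even.

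The hard part will be the uniform control of the $2$-part of the ring class group of discriminant $-4D$ — and of the position of $[\mathfrak m]$ relative to $H$ — as $k$ varies: genus theory, hence the $2$-rank, is provably insufficient here, so one genuinely needs information about the $4$-rank and about the precise class of a prime above $k$. The bookkeeping is further complicated by the prime $2$ dividing both $k$ and the conductor $2dp^{(s-1)/2}$, so that ideals of norm $k^m$ need not be invertible and the ideal–form correspondence must be applied with care, and by $k$ being composite in general. At present these verifications appear feasible only for each fixed $k$ by direct computation in the class group, which is precisely why the statement is offered as a conjecture; a proof would, I expect, require a new input bounding the relevant $4$-rank, or an explicit even-order square root of $[F]$, valid for all admissible $k$.
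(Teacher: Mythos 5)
The statement you were asked to prove is Conjecture 4.1 of the paper: the authors give no proof, only computational evidence for individual values of $k$ (the divisor $d=ab$ worked in every case they checked), and your submission is, by your own admission, also not a proof. The preliminary facts you establish are correct --- $4\Vert k$ forces $2k-1=p^s$ with $s$ odd and $p\equiv 7\pmod 8$, the form $F=(a^2b^2,0,2k-1)$ is ambiguous and represents $k^2$ via $F((k-1)/ab,\,1)=k^2$, and your $k=4$ computation showing $h(-252)=4$ with $[F]$ the nontrivial square correctly demonstrates that genus theory alone cannot settle the question. Your reformulation in the ring class group of conductor $2abp^{(s-1)/2}$ --- that the conjecture amounts to showing the class $[\mathfrak m]$ of an ideal above $k$ never falls in the subgroup $H$ generated by the squares of the prime classes above $k$ --- is a plausible repackaging of Proposition 3.3, though, as you yourself note, it is complicated by the fact that $2$ divides both $k$ and the conductor, so ideals of norm $k^m$ are not prime to the conductor and the form--ideal dictionary does not apply directly; the paper sidesteps this by working with forms and explicit composition throughout.

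The genuine gap is therefore the entire substance of the claim: neither you nor the paper supplies any uniform control, as $k$ varies, of the $4$-rank of the relevant class group or of the position of $[\mathfrak m]$ relative to $H$, and no such result is currently known. What you have written is a strategy sketch ending in an honest acknowledgement that the decisive step is open. That stance matches the paper's own (the statement is offered as a conjecture precisely because only case-by-case verification in the class group is available), but it cannot be accepted as a proof.
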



\end{document}